\documentclass[12pt]{article}
\usepackage[latin1]{inputenc}
\usepackage{amsmath}
\usepackage{amsfonts}
\usepackage{amssymb}
\usepackage[american]{babel}
\usepackage{graphicx}
\usepackage{amsthm}
\usepackage{color}
\usepackage{xcolor}
\usepackage[normalem]{ulem}
\usepackage{hyperref}

\begin{document}

%\linenumbers

\title{Geometric description of $d$-dimensional flows of a graph}{}

\newtheorem{theorem}{Theorem}
\newtheorem{proposition}[theorem]{Proposition}
\newtheorem{lemma}[theorem]{Lemma}
\newtheorem{definition}[theorem]{Definition}
\newtheorem{example}[theorem]{Example}
\newtheorem{corollary}[theorem]{Corollary}
\newtheorem{conjecture}[theorem]{Conjecture}
\newtheorem{remark}{Remark}
\newtheorem{problem}{Problem}

\author{Davide Mattiolo\thanks{E-mail address: davide.mattiolo@kuleuven.be} \footnote{Department of Computer Science, KU Leuven Kulak, 8500 Kortrijk, Belgium}, Giuseppe Mazzuoccolo \footnote{Dipartimento di Informatica, Universit\`{a} degli Studi di Verona, Italy},\\Jozef Rajn\'{i}k \footnote{Department of Computer Science, Comenius University in Bratislava, Slovakia}, Gloria Tabarelli \footnote{Dipartimento di Matematica, Universit\`{a} di Trento,
Italy}}
\maketitle

\begin{abstract}
A $d$-dimensional nowhere-zero $r$-flow on a graph $G$, an $(r,d)$-NZF from now on, is a flow where the value on each edge is an element of $\mathbb{R}^d$ whose (Euclidean) norm lies in the interval $[1, r-1]$. Such a~notion is a natural generalization of the well-known concept of a circular nowhere-zero $r$-flow (i.e.\ $d = 1$). The minimum of the real numbers $r$ such that a graph $G$ admits an $(r, d)$-NZF is called the $d$-dimensional flow number of $G$ and is denoted by $\phi_d(G)$. In this paper we provide a geometric description of some $d$-dimensional flows on a graph $G$, and we prove that the existence of a suitable cycle double cover of $G$ is equivalent, for $G$, to admit such a geometrically constructed $(r,d)$-NZF. This geometric approach allows us to provide upper bounds for $\phi_{d-2}(G)$ and $\phi_{d-1}(G)$, assuming that $G$ admits an (oriented) $d$-cycle double cover.
\end{abstract}

{ {\bf Key words}: nowhere-zero flow, vector flow, cycle double cover, oriented cycled double cover

{\bf MSC}: 05C21}

\section{Introduction}

The aim of this paper is to give a geometrical description of $d$-dimensional nowhere-zero $r$-flows in finite graphs and to prove connections with the (Oriented) Cycle Double Cover Conjecture.
Let $r\geq2$ be a real number and $d$ a positive integer, a \textit{$d$-dimensional nowhere-zero $r$-flow} on a graph $G$, denoted $(r,d)$-NZF on $G$, is an orientation of $G$ together with an assignment $\varphi \colon E(G)\to \mathbb{R}^d$ such that, for all $e\in E(G),$ the (Euclidean) norm of $\varphi(e)$ lies in the interval $[1,r-1]$ and, for every vertex, the sum of the inflow and outflow is the zero vector in $\mathbb{R}^d$. 
The \emph{$d$-dimensional flow number} of a bridgeless graph $G$, denoted by $\phi_d(G)$, is defined as the infimum of the real numbers $r$ such that $G$ admits an $(r,d)$-NZF. 
Note that, by Seymour's 6-flow theorem \cite{Seymour6flow} we have that  $\phi_d(G) \leq 6$ for every $d$.
Actually, $\phi_d(G)$ is a minimum: due to the above upper bound, it suffices to consider only the set of feasible $d$-dimensional nowhere-zero $r$-flows with $r \leq 6$, which can be represented as a compact subset of $\mathbb{R}^{d\cdot |E(G)|}$, and the function that assigns to every feasible flow the maximum norm among its components, that are elements of $\mathbb{R}^d$, is continuous.

When $d=1$, the major open problem in this area is Tutte's $5$-flow Conjecture \cite{Tutte5FC}, claiming that $\phi_1(G)\le5$ for all bridgeless cubic graphs $G$. Not much is known for larger values of $d$. Thomassen \cite{Tho} and Zhang et al. \cite{Zhangandal} considered flows using vectors from $S^d$, where $S^d$ denotes the set of all unit vectors from $\mathbb{R}^{d+1}$ (whose vertices form the surface of the unit sphere of dimension $d$). For brevity, an $(r, d)$-NZF using only vectors from a set $X$ is called an $X$-flow. When $d\ge3$, the following conjecture has been proposed by Jain.

\begin{conjecture}[$S^2$-flow Conjecture]\label{conj:S2}
 Let $G$ be a bridgeless graph. Then, $\phi_d(G)=2$ for every $d\ge3$.
\end{conjecture}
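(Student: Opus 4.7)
My plan is to exploit the link between $S^d$-flows and oriented cycle double covers (OCDCs) that the paper's abstract explicitly suggests. The basic geometric construction is the following. Suppose $G$ admits an OCDC $\mathcal{C}=\{C_1,\ldots,C_k\}$ and fix any $k$ points $\alpha_1,\ldots,\alpha_k\in\mathbb{R}^d$. Every edge $e$ of $G$ belongs to exactly two cycles of $\mathcal{C}$, say $C_i$ and $C_j$, necessarily with opposite orientations along $e$. Orienting $e$ in agreement with $C_i$ and setting $\varphi(e):=\alpha_i-\alpha_j$ produces a flow, because each oriented cycle is itself a circulation in $\mathbb{R}^d$. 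If the $\alpha_i$ are placed at the vertices of a regular simplex of edge length $1$ (which embeds in $\mathbb{R}^{k-1}$), then $|\varphi(e)|=1$ on every edge, and $\varphi$ is an $(r,d)$-NZF with $r=2$.

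Consequently, the first step is to prove that if $G$ admits an OCDC with at most $d+1$ cycles, then $\phi_d(G)=2$. This is presumably one of the results promised by the paper's abstract, and already settles the conjecture for every fixed graph $G$ once $d$ is large enough that $G$ possesses some OCDC on at most $d+1$ cycles. The second step is to lower the number of cycles required by replacing the rigid simplex with more flexible configurations: any embedding $\alpha_1,\ldots,\alpha_k\in\mathbb{R}^d$ in which precisely the pairs $(i,j)$ with $C_i$ and $C_j$ sharing an edge are at unit distance still yields a $2$-flow, and unit-distance graphs in $\mathbb{R}^3$ are considerably richer than single tetrahedra, so one can realize OCDCs whose intersection patterns are far from a clique.

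The genuinely hard case is the regime in which $G$ forces OCDCs with many cycles, more than the ambient dimension $\mathbb{R}^d$ easily accommodates via a unit-distance realization; the Petersen graph, for instance, has no oriented $4$-cycle double cover, so even the flexible construction above must work hard for $d=3$. To push past this one could attempt to decompose $G$ along small edge-cuts and glue $S^2$-flows from the $3$-edge-connected pieces, or combine a nowhere-zero $4$- or $5$-flow with suitably chosen planar rotations to emulate an $S^2$-valued assignment. The main obstacle, however, is the Oriented Cycle Double Cover Conjecture itself: without knowing that every bridgeless graph admits some OCDC, no construction of this flavour can cover every bridgeless graph, and so, at least along this route, the $S^2$-flow Conjecture seems to reduce to a statement no easier than OCDC.
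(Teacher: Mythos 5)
This statement is a conjecture (attributed to Jain), and the paper does not prove it; the most it establishes is the conditional implication that the Oriented $5$-Cycle Double Cover Conjecture implies the $S^2$-flow Conjecture for $d\ge 4$. Your proposal, read as a proof, therefore has an unavoidable gap, and to your credit you name it yourself in the last paragraph: the entire construction presupposes that $G$ admits an oriented cycle double cover with few cycles, which is itself an open problem. Your first step is sound and is in substance identical to the paper's route: an oriented $k$-cycle double cover together with $k$ points $\alpha_1,\dots,\alpha_k$ forming a regular unit simplex yields the flow $\varphi(e)=\alpha_i-\alpha_j$ of constant norm $1$, which is exactly the paper's $H_k$-flow (differences of standard basis vectors) rescaled and viewed inside the hyperplane $\sum x_i=0$, giving an $S^{k-2}$-flow and hence $\phi_{k-1}(G)=2$. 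So "OCDC with at most $d+1$ cycles implies $\phi_d(G)=2$" is correct, but it is a conditional statement, not a proof of the conjecture.

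Two further points deserve emphasis. First, even granting the full Oriented $5$-Cycle Double Cover Conjecture, your method only delivers $\phi_d(G)=2$ for $d\ge 4$; the case $d=3$ of the conjecture would require either an oriented $4$-cycle double cover (which the Petersen graph does not have) or a unit-distance realization in $\mathbb{R}^3$ of the intersection pattern of a $5$-cycle double cover, and for a cubic graph every pair of cycles in a $5$-CDC can share edges, which would force five mutually unit-distant points in $\mathbb{R}^3$ --- impossible. Your suggested escape routes (gluing along small cuts, combining integer flows with rotations) are not developed and do not close this case. Second, your "flexible unit-distance" refinement only relaxes the requirement on non-intersecting pairs of cycles, which does not help when the intersection graph of the cover is complete, as it typically is for covers with few cycles. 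In short: the construction is correct and matches the paper's Theorem~\ref{thm:equiv_CDC} combined with Remark~\ref{rem:sd_equivalence}, but it proves only a conditional reduction, not the conjecture, and even the reduction leaves $d=3$ untouched.
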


%Thomassen \cite{Tho} investigated $S^1$ and $S^2$-flows. 
The case $d=2$ is already investigated by Thomassen in \cite{Tho} and then by the authors in \cite{MMRT_wheels} and \cite{MMRT_(r_d)NZF}.

In \cite{MMRT_wheels} we provide a lower bound for $\phi_2(G)$ depending on the odd-girth of $G$, when $G$ is a bridgeless cubic graphs. 
On the other hand, in \cite{MMRT_(r_d)NZF} we make a broader study of $(r,2)$-NZFs on bridgeless graphs and prove that the Oriented $5$-Cycle Double Cover Conjecture \cite{Archdeacon, Jaeger} implies that, for every bridgeless graph $G$, $\phi_2(G)\le \tau^2$, where $\tau$ denotes the Golden Ratio.

In order to introduce the Oriented 5-Cycle Double Cover Conjecture we need some terminology. If $G$ is a graph, a subgraph $H$ of $G$ is said to be a \emph{cycle} if every vertex in $H$ has even degree. If $O$ is an orientation of the edges of $G$, we denote by $O(G)$ the directed graph so obtained and, for every edge $e \in E(G)$, we denote by $O(e)$ its orientation with respect to $O$. A subgraph $H$ of $O(G)$ is a \emph{directed cycle} of $O(G)$ if $O$ induces an \emph{eulerian} orientation of $E(H)$, that is for each vertex $v$ of $H$, the indegree of $v$ equals the outdegree of $v$.

A collection $\mathcal{C}=\{C_1, \dots, C_k\}$ of cycles of a graph $G$ is said to be a \emph{cycle double cover} of $G$ if every edge $e \in E(G)$ belongs to exactly two cycles $C_i$ and $C_j$ of $\mathcal{C}$. $\mathcal{C}$ is said to be an \emph{oriented cycle double cover} of $G$ if it is a cycle double cover of $G$ with directed cycles and, for every edge $e \in E(G)$, the directions of $O_i(C_i)$ and $O_j(C_j)$ are opposite on $e$.
%Then, the collection $\mathcal{C}=\{(C_1,O_1),..., (C_r,O_r)\}$ is said to be an \emph{oriented even subgraph double cover} of $G$.
If we would like to stress the number of cycles in $\mathcal{C}$ we will write that $\mathcal{C}$ is a (oriented) $k$-cycle double cover of $G$ (see for instance \cite{ZhangBook}).

The Oriented $5$-Cycle Double Cover Conjecture, which is due to Archdeacon and Jaeger \cite{Archdeacon, Jaeger}, states the following.

\begin{conjecture}[Oriented $5$-Cycle Double Cover Conjecture]\label{conj:o5cdcc}
Each bridgeless graph $G$ has an oriented $5$-cycle double cover.
\end{conjecture}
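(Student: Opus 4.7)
The statement is the classical open conjecture of Archdeacon and Jaeger, for which no proof is currently known; what follows is a plan of the most natural attack, not a completed argument.

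First I would reduce the problem to cyclically $4$-edge-connected cubic graphs of girth at least $5$. Using standard vertex-splitting and small-cut reductions developed in the cycle double cover literature, one shows that a minimum counterexample must have this form: every vertex of degree larger than $3$ can be expanded into a short cycle without destroying the absence of an oriented $5$-CDC, edges in a $2$-edge-cut can be contracted, and short cycles can be collapsed by known reduction lemmas. This is routine but crucial for restricting the search.

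Next I would dispose of cubic graphs admitting a nowhere-zero $4$-flow, equivalently, the $3$-edge-colorable ones. If $M_1,M_2,M_3$ are the three color classes of a proper edge coloring, the three Eulerian subgraphs $M_i\triangle M_{i+1}$ form a $3$-CDC; picking consistent orientations around each bichromatic cycle produces an oriented $3$-CDC. One then refines this to an oriented $5$-CDC by splitting two of the three cycles along a common even subgraph while respecting the orientation condition on each shared edge.

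The remaining, and decisive, case is that of snarks. Here the plan is either to derive the oriented $5$-CDC from a circular nowhere-zero $5$-flow via Jaeger's construction (which would require Tutte's $5$-flow conjecture), or to argue by induction on the order using a reducible configuration in the spirit of existing snark-reduction techniques. \emph{This is where the main obstacle lies}: the absence of a general structural tool for snarks is exactly what has kept the conjecture open for roughly four decades, and without either a new reducible configuration, a breakthrough on the $5$-flow conjecture, or an unforeseen leverage coming from the geometric $(r,d)$-NZF framework introduced in this paper, I do not see how to close this case.
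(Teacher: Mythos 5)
This statement is labelled as a conjecture in the paper --- it is the open Oriented $5$-Cycle Double Cover Conjecture of Archdeacon and Jaeger --- and the paper offers no proof of it; it only proves consequences of it. You were right to recognize that no complete proof can be expected here, and your honesty about the decisive gap (the snark case) is appropriate. So there is nothing in the paper to compare your argument against, and your proposal is, as you say yourself, not a proof.

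One concrete error in your sketch is worth flagging, because the paper itself supplies the tool to see it. You claim that for a $3$-edge-colorable cubic graph the three bichromatic $2$-factors $M_i \triangle M_j$ can be given ``consistent orientations'' so as to form an \emph{oriented} $3$-CDC. By Corollary \ref{cor_d3} of the paper, a graph has an oriented $3$-cycle double cover if and only if it has a nowhere-zero $3$-flow, and for cubic graphs this happens if and only if the graph is bipartite. Thus $K_4$, or any non-bipartite $3$-edge-colorable cubic graph, has a $3$-CDC of the kind you describe but admits no orientation of it satisfying the opposite-direction condition on each edge; the obstruction is a parity/orientability constraint that cannot be fixed cycle by cycle. (It is true, by a result of Jaeger, that graphs with a nowhere-zero $4$-flow do satisfy the oriented $5$-CDC conjecture, but the route to that is via flows with values in a suitable group or lattice, not via orienting the bichromatic $3$-CDC.) The remaining case, snarks, is exactly where the conjecture is open, and your proposal correctly identifies that no known technique closes it.
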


Motivated by some results of Thomassen \cite{Tho}, in this note we provide a geometric interpretation of $(r,d)$-NZFs.
More precisely, in \cite{Tho} the following proposition is proved (see Proposition 1 in \cite{Tho}).

\begin{proposition}

Let $G$ be a graph. Then (a) and (b) below are equivalent,
and they imply (c) where
\item[(a)] $G$ has a nowhere zero 3-flow.
\item[(b)] $G$ has an $R_3$-flow.
\item[(c)] $G$ has an $S^1$-flow.

If $G$ is cubic the three statements are equivalent, and $G$ satisfies (a), (b), (c)
if and only if $G$ is bipartite.
\label{prop-tho}
\end{proposition}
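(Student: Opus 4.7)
Let $\omega=e^{2\pi i/3}$ and $R_3=\{1,\omega,\omega^2\}\subset S^1$. The plan is to prove (a) $\Leftrightarrow$ (b) and (b) $\Rightarrow$ (c) in general, and then to close the cubic case via the extra link (c) $\Rightarrow$ bipartite $\Rightarrow$ (a).

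The implication (b) $\Rightarrow$ (c) is immediate from the inclusion $R_3\subset S^1$. For (b) $\Rightarrow$ (a) I would project onto the real axis: given an $R_3$-flow $\varphi$, set $f(e):=2\operatorname{Re}(\varphi(e))$. Since $\operatorname{Re}(1)=1$ and $\operatorname{Re}(\omega)=\operatorname{Re}(\omega^2)=-\tfrac12$, the function $f\colon E(G)\to\{-1,2\}$ is nowhere zero and integer, and $\mathbb{R}$-linearity of $\operatorname{Re}$ carries vertex conservation from $\mathbb{R}^2$ to $\mathbb{R}$, so $f$ is a nowhere-zero integer $3$-flow.

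For the converse (a) $\Rightarrow$ (b), I would start from an integer $3$-flow $f\colon E(G)\to\{1,2\}$, reverse the orientation of every edge with $f(e)=1$ and relabel it $-1$, obtaining $f\colon E(G)\to\{-1,2\}$. Declare $\varphi(e):=1$ on $E_2:=f^{-1}(2)$, while on $E_1:=f^{-1}(-1)$ each edge must receive either $\omega$ or $\omega^2$. Expanding $\sum\varphi(e)=0$ at a vertex $v$ in the $\mathbb{R}$-basis $\{1,\omega\}$ of $\mathbb{R}^2$ and using $\omega^2=-1-\omega$ reduces the whole requirement to finding $E_\omega\subseteq E_1$ whose signed divergence at each $v$ equals $\tfrac12 s_v(E_1)$; conservation of $f$ gives $s_v(E_1)=2s_v(E_2)$, so this target is integer. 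The feasibility system $Mx=b$, $0\le x\le 1$, with $M$ the signed vertex-edge incidence matrix of $(V,E_1)$, is fractionally feasible at $x_e=\tfrac12$, and since $M$ is totally unimodular with integer right-hand side the polytope has an integer vertex, which provides the desired partition.

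For the cubic case I close the circle bipartite $\Rightarrow$ (a) $\Rightarrow$ bipartite, together with (c) $\Rightarrow$ bipartite. If $G$ is cubic and bipartite, orient all edges from one colour class to the other and assign value $1$: each vertex satisfies $|d^+(v)-d^-(v)|=3\equiv 0\pmod 3$. Conversely, in a cubic mod-$3$ orientation $|d^+(v)-d^-(v)|\in\{1,3\}$ and only $\pm 3$ is divisible by $3$, so every vertex is a source or a sink, yielding a bipartition. For (c) $\Rightarrow$ bipartite: three unit vectors summing to $0$ must be pairwise $120^\circ$ apart, so at each cubic vertex the three (sign-oriented) edge-values form $z_v\cdot R_3$ for some $z_v\in S^1$. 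Across an edge $uv$ this forces $z_v/z_u\in\{-1,-\omega,-\omega^2\}$; taking the product around a cycle of length $\ell$ produces a global factor of $(-1)^\ell\cdot\omega^{\sum k_i}$ which must equal $1$, and since $1\notin\{-1,-\omega,-\omega^2\}$ this forces $(-1)^\ell=1$, i.e.\ $\ell$ even, so $G$ is bipartite.

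The main obstacle is the combinatorial decomposition step in (a) $\Rightarrow$ (b): encoding the mod-$3$ conservation of $f$ as a genuine $\mathbb{Z}$-compatible splitting of $E_1$. Everything else is either a set inclusion, a one-line projection, or the standard cycle-holonomy argument; the total-unimodularity step is the only place that does non-bookkeeping work.
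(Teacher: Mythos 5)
The paper itself contains no proof of this proposition: it is quoted verbatim from Thomassen's paper \cite{Tho} (Proposition~1 there) and used as a black box, so there is no in-paper argument to compare yours against. Judged on its own, your proof is correct and essentially complete. The projection $f(e)=2\operatorname{Re}\varphi(e)$ for (b)~$\Rightarrow$~(a) works, and the converse is where the real content lies: your reduction of the conservation law in the basis $\{1,\omega\}$ to the single condition $\sum_{e\in E_1}\varepsilon_{v,e}x_e=\tfrac12 s_v(E_1)$ checks out (both coordinates collapse to the same equation precisely because $s_v(E_1)=2s_v(E_2)$, which also makes the right-hand side integral), and rounding the half-integral point via total unimodularity of the signed incidence matrix is a legitimate way to finish. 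The holonomy argument for (c)~$\Rightarrow$~bipartite in the cubic case is also sound; note only that $z_v$ is defined just up to multiplication by $R_3$, but since each edge forces $z_v/z_u\in -R_3$ for any choice of representatives, the product around an odd closed walk lies in $-R_3$, which does not contain $1$. The one unstated dependency is in ``bipartite $\Rightarrow$ (a)'': orienting all edges across the bipartition and assigning $1$ produces a modulo-$3$ orientation, i.e.\ a nowhere-zero $\mathbb{Z}_3$-flow, and passing from that to the \emph{integer} $3$-flow asserted in (a) requires Tutte's equivalence of $\mathbb{Z}_k$-flows and integer $k$-flows. That is standard, but worth naming; alternatively, for cubic bipartite $G$ you can avoid it entirely by taking a proper $3$-edge-colouring (K\"{o}nig), orienting every edge from one side of the bipartition to the other, and assigning values $1$, $1$, $-2$ to the three colour classes.
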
	
In particular, it is shown that a bridgeless graph $G$ has a $(3,1)$-NZF if and only if $G$ has an $R_3$-flow, i.e.\ a flow with elements in $R_3 = \{z\in\mathbb{C}\colon z^3=1\}$. This is equivalent to having an $S^1$-flow, when $G$ is cubic. %Note that in such a case $G$ is bipartite \gl{Should we remove this sentece?}. 
Moreover, it is also shown that a bridgeless graph $G$ has a $3$-cycle double cover if and only if $G$ has a $T$-flow, i.e.\ a flow with elements in $T = \{(1,1,0),(0,1,1),(1,0,1),(1,-1,0),(0,1,-1),(1,0,-1)\}$. A natural generalization of $T$ to a suitable set of elements $T_d$ in $\mathbb{R}^d$ gives the equivalence between the existence of a $T_d$-flow on $G$ and a $d$-cycle double cover of $G$.
In this paper we move our attention to oriented cycle double covers. We show that, for every $d\geq 3$, a graph $G$ has an oriented $d$-cycle double cover if and only if it admits a flow with values in the set $H_d$, which is a slight variation of the set $T_d$. We also give an alternative description of $H_d$ in terms of the line graph of a crown graph.  Finally, using a geometric argument, we give an upper bound on $\phi_{d-1}(G)$ and $\phi_{d-2}(G)$, assuming that $G$ admits a $d$-cycle double cover (not necessarily oriented) and an oriented $d$-cycle double cover, respectively.

\section{Geometric description of a $d$-dimensional flow}

Let $R_k$ be the set of the $k$-th roots of unity, that is the solutions to the complex equation $z^k = 1$.
It is straightforward to see that if a graph admits an $R_3$-flow then it admits an $S^1$-flow. DeVos \cite{DeVos} suggested that the converse could also be true. Thomassen \cite{Tho} showed that this is true for cubic graphs, but does not hold in general.

Tutte's $3$-flow Conjecture claims that every $4$-edge-connected graph has a $(3,1)$-NZF.
Together with Thomassen's result this implies that every $4$-edge-connected graph has an $S^1$-flow. In our terminology this is equivalent to saying that $\phi_2(G)=2$ for every $4$-edge-connected graph $G$.

In this section, we would like to look at an $(r,d)$-NZF from a slightly different point of view. We modify a bit the notation in order to obtain an easier generalization to higher dimensions.
Instead of considering flow values in $R_3$, we consider flow values in $H=R_3 \cup -R_3$, which is clearly equivalent because of the possibility of reorienting any edge in the opposite direction if needed. The points of $H$ are the vertices of a regular hexagon in the complex plane, and so they are all points of $S^1$. 
Note that, up to rigid movements and a normalization, every nowhere-zero flow having as flow values the six vertices of an arbitrary regular hexagon centered in the origin of $\mathbb{R}^d$, can be transformed into an $H$-flow (and then in an $R_3$-flow).    

For every $d\geq 3$, we consider the following subsets of $\mathbb{R}^d$:

$$\Sigma_d=\{(x_1,\ldots,x_d)\in \mathbb{R}^d : \sum_{i=1}^d x_i = 0, \sum_{i=1}^d x_i^2=2\},$$

$$H_d=\Sigma_d \cap \mathbb{Z}^d.$$

The set $\Sigma_d$ is a sphere of dimension $d-2$ embedded in $\mathbb{R}^d$. Thus we have the following.

\begin{remark}
\label{rem:sd_equivalence}
A graph admits a $\Sigma_d$-flow if and only if it admits an $S^{d-2}$-flow.
\end{remark}

%Then, a graph admits an $\Sigma_d$-flow if and only if it admits an $S^{d-2}$-flow.
Since $H_d$ is the set of points of $\Sigma_d$ having integer coordinates, we can describe $H_d$ as the set of points of $\mathbb{R}^d$ having exactly one coordinate equal to $1$, exactly one equal to $-1$ and all remaining coordinates equal to $0$.
For instance, we have $H_3=\{(1,-1,0),(-1,1,0),(1,0,-1),(-1,0,1),(0,1,-1),$ $(0,-1,1)\}$. It is easy to check that such six points are the vertices of a regular hexagon embedded in $\mathbb{R}^3$. Then, as already observed, a graph admits an $R_3$-flow if and only if it admits an $H_3$-flow.

The notation introduced above permits to prove the following more general equivalence.

\begin{theorem}\label{thm:equiv_CDC}
A graph $G$ admits an $H_d$-flow if and only if $G$ admits an oriented $d$-cycle double cover. 
\end{theorem}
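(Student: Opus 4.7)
My plan is to prove both directions by treating the $d$ coordinates of an $H_d$-flow as indicators of $d$ directed cycles, and conversely.

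For the forward direction, assume $\varphi\colon E(G)\to H_d$ is an $H_d$-flow with underlying orientation $O$. For each $i\in\{1,\dots,d\}$ I would define $C_i$ as the subgraph formed by the edges $e$ with $\varphi(e)_i\neq 0$, and consider the projection $\pi_i\circ\varphi\colon E(G)\to\{-1,0,+1\}$. Because $\varphi$ is a flow in $\mathbb{R}^d$, each coordinate projection is a flow in $\mathbb{R}$, and since its only nonzero values are $\pm 1$, the support $C_i$ has even degree at every vertex, hence is a cycle of $G$. Moreover, the signs define an orientation $O_i$ of $C_i$ (reverse the arcs where $\pi_i\circ\varphi=-1$) under which $C_i$ becomes an eulerian directed cycle, i.e.\ a directed cycle in the sense of the excerpt. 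Every edge $e$ has exactly two nonzero coordinates in $\varphi(e)$, one equal to $+1$ and one equal to $-1$, so $e$ belongs to exactly two of the $C_i$'s, and the signs $+1,-1$ force $O_i,O_j$ to give opposite orientations to $e$. Therefore $\{C_1,\dots,C_d\}$ together with $O_1,\dots,O_d$ is an oriented $d$-cycle double cover.

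For the backward direction, start from an oriented $d$-cycle double cover $\mathcal{C}=\{C_1,\dots,C_d\}$ with orientations $O_1,\dots,O_d$. Pick any orientation $O$ of $G$. For each edge $e$ lying in $C_i$ and $C_j$ (with $i<j$), define $\varphi(e)\in\mathbb{R}^d$ as the vector whose $i$-th coordinate is $+1$ if $O_i(e)=O(e)$ and $-1$ otherwise, whose $j$-th coordinate is $+1$ if $O_j(e)=O(e)$ and $-1$ otherwise, and whose remaining coordinates are $0$. Because $O_i$ and $O_j$ are opposite on $e$, the nonzero coordinates are exactly one $+1$ and one $-1$, so $\varphi(e)\in H_d$. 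To verify the flow condition, I would check each coordinate separately: the $i$-th coordinate of $\varphi$ is supported on $C_i$, assigns $\pm 1$ according to whether the arc agrees with $O_i$, and this is precisely the signed indicator of the eulerian orientation $O_i$ of $C_i$, which is a flow; summing over $i$ gives that $\varphi$ itself is a flow.

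Both constructions are clearly inverse to each other, so the equivalence holds. I do not anticipate a genuine obstacle: the content reduces to the bookkeeping identity ``each edge carries two opposite $\pm 1$'s $\Longleftrightarrow$ each edge lies in two oppositely oriented cycles''. The only place that needs care is consistency of orientations: one must fix a global reference orientation $O$ of $G$ and verify that the sign $\pm 1$ recorded in coordinate $i$ really encodes $O_i$ faithfully, so that the projection onto coordinate $i$ recovers the directed cycle $C_i$ and not merely the underlying even subgraph.
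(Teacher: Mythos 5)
Your proposal is correct and follows essentially the same route as the paper: both directions are handled by reading the $d$ coordinates of the $H_d$-flow as signed indicators of $d$ eulerian-oriented cycles relative to a fixed reference orientation $O$, and conversely. The key observations you make (coordinate projections are $\{-1,0,+1\}$-valued flows, hence their supports are cycles with eulerian orientations; each edge carries exactly one $+1$ and one $-1$, forcing opposite orientations on the two covering cycles) are exactly those in the paper's proof.
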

\begin{proof}
Let $\mathcal{C}=\{O_1(C_1),\dots,O_d(C_d)\} $ be an oriented $d$-cycle double cover of $G$. Choose an arbitrary orientation $O$ for the graph $G$ and consider $e \in E(G)$. Since $\mathcal{C}$ is an oriented $d$-cycle double cover of $G$ there exist exactly two different indices $h, k \in \{1,2,...,d\}$ such that $e \in C_h\cap C_k$ and $O_h(e) = O(e) \neq O_k(e)$. We assign to the edge $e$ the $d$-tuple $\varphi(e)$ having $1$ in the entry $h$, $-1$ in the entry $k$ and $0$ in all other entries. Note that, for $i\in\{1,\dots,d\}$, the $i$-th component of $\varphi$ defines a nowhere-zero $2$-flow on $C_i$ with respect to $O$. Hence, $\varphi$ is an $H_d$-flow on $G$ with respect to the chosen orientation $O$.

For the converse, let $\varphi$ be an $H_d$-flow on $G$ with respect to the orientation $O$ of $G$. Construct an oriented $d$-cycle double cover of $G$ as follows.
For each $i\in \{1,...,d\}$, let $C_i$ be the subgraph of $G$ induced by the edges $e \in E(G)$ such that $\varphi(e) \neq 0$ in the $i$-th entry. Note that $C_i$ is a cycle of $G$. Indeed, since $\varphi$ is an $H_d$-flow on $G$, for every vertex $v\in V(G)$ there is an even number, eventually $0$, of edges incident to $v$ with a non-zero value of $\varphi$ in the $i$-th entry.
Construct an orientation $O_i$ on $C_i$ by assigning $O_i(e)=O(e)$ on every edge $e$ of $C_i$ such that $\varphi(e)=1$, and letting $O_i(e)$ be opposite to $O(e)$ otherwise.  
By construction of $O_i$, for every vertex $v\in V(C_i)$, the indegree of $v$ with respect to $O_i$ is equal to the outdegree of $v$ with respect to $O_i$, since $\varphi$ is an $H_d$-flow on $G$ with respect to the orientation $O$. Hence, $O_i(C_i)$ is a directed cycle of $G$. Moreover, note that, for every edge $e \in E(G)$, $\varphi(e)$ has exactly two non-zero entries and that such entries have opposite values. Therefore, the collection $\mathcal{C}=\{O_1(C_1) \dots, O_d(C_d)\}$ is an oriented $d$-cycle double cover of $G$.
\end{proof}

The following result is a consequence of Proposition \ref{prop-tho} and previous results.

\begin{corollary} \label{cor_d3}
Let $G$ be a graph. The following assertions are equivalent: 
\begin{enumerate}
 \item $G$ has a nowhere-zero $3$-flow;
 \item $G$ has an $R_3$-flow;
 \item $G$ has an $H_3$-flow;
 \item $G$ has an oriented $3$-cycle double cover.
\end{enumerate}
\end{corollary}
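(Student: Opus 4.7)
The plan is to prove the four-way equivalence by chaining three pairwise equivalences, each of which is already available in the excerpt. In other words, the corollary is essentially a collation of earlier results, and all I need to do is point to them in the correct order and record one small geometric identification.

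Concretely, I would set up three links. The equivalence (1)$\Leftrightarrow$(2) is exactly the equivalence of parts (a) and (b) of Proposition~\ref{prop-tho}. The equivalence (3)$\Leftrightarrow$(4) is the specialization of Theorem~\ref{thm:equiv_CDC} to $d=3$. The remaining link (2)$\Leftrightarrow$(3) was already observed in the paragraph introducing $H_d$, and I would justify it by the following geometric argument: the six points of $H_3$ are the vertices of a regular hexagon lying in the $2$-plane $\{x\in\mathbb{R}^3 : x_1+x_2+x_3=0\}$ centered at the origin, while the six elements of $R_3\cup(-R_3)$ are the vertices of a regular hexagon in $\mathbb{C}\cong\mathbb{R}^2$ centered at the origin. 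A linear bijection between the two ambient $2$-planes sends $R_3\cup(-R_3)$ bijectively onto $H_3$, so an $(R_3\cup(-R_3))$-flow on $G$ corresponds to an $H_3$-flow on $G$. Finally, since reversing the orientation of an edge negates its flow value, an edge carrying a value in $-R_3$ can be reoriented to carry a value in $R_3$, which identifies $R_3$-flows with $(R_3\cup(-R_3))$-flows.

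I do not foresee any genuine obstacle: once the three pieces are lined up, the proof is immediate. The only point worth checking explicitly is that the linear bijection between the two $2$-dimensional ambient subspaces preserves the flow axioms, i.e.\ that it is injective (so nowhere-zero is preserved) and linear (so the conservation law is preserved). Both properties are automatic, so the argument is entirely routine bookkeeping.
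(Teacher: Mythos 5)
Your proposal is correct and matches the paper's (implicit) argument exactly: the paper presents this corollary as a collation of Proposition~\ref{prop-tho} for (1)$\Leftrightarrow$(2), Theorem~\ref{thm:equiv_CDC} with $d=3$ for (3)$\Leftrightarrow$(4), and the already-noted identification of $H_3$ and $R_3\cup(-R_3)$ as regular hexagons (together with the edge-reorientation trick) for (2)$\Leftrightarrow$(3). Nothing is missing.
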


Hence, in our terminology, Thomassen shows that the existence of an $H_3$-flow for a graph $G$ implies the existence of a $\Sigma_3$-flow for a graph $G$. Notice that, from our definition of $H_3$, this implication is straightforward. In \cite{Tho} it is shown that the converse is not true unless $G$ is a cubic graph. Indeed, as already remarked, Thomassen presented examples of graphs admitting a $\Sigma_3$-flow but without an $H_3$-flow.

The Petersen graph $P$ is an example of a graph that admits an  $\Sigma_4$-flow but, since it is not $3$-edge-colourable, without an oriented $4$-cycle double cover, and hence without an $H_4$-flow.
Indeed, observe that $P$ admits a $\Sigma_4$-flow since it admits an $S^2$-flow (Remark \ref{rem:sd_equivalence}), which is depicted in Figure \ref{fig:S2_flow_pet}. In this figure, the vectors $z_1$, $z_2$, $z_3$, $z_4$, and $z_5$ are unit vectors. They are arranged to form a pentagon and a star lying in parallel planes and having their vertices on the unitary sphere.

\begin{figure}[h]
\begin{minipage}[l]{0.45\linewidth}
\includegraphics[width=\linewidth]{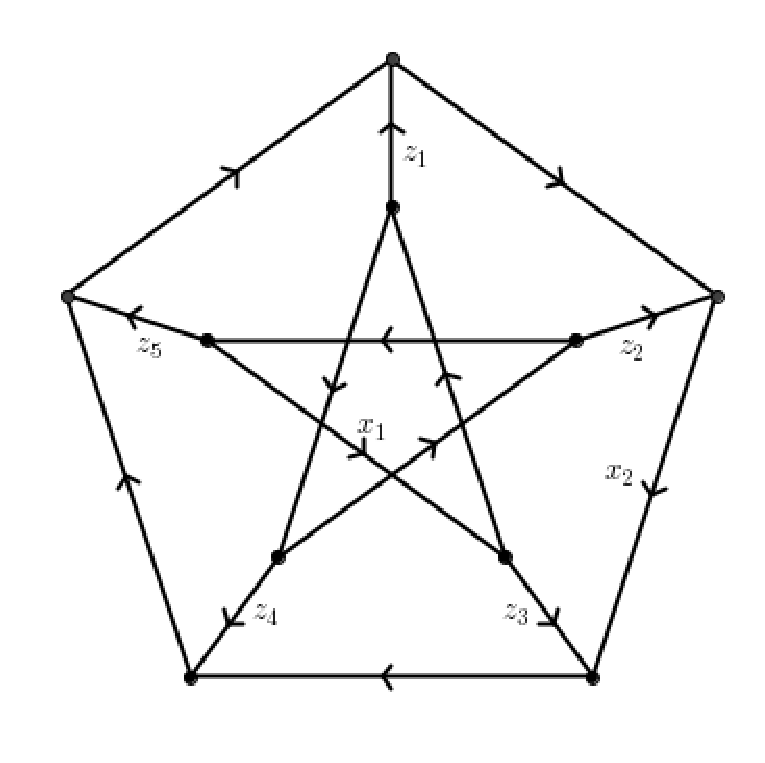}
\end{minipage}
\hfill
\begin{minipage}[l]{0.45\linewidth}
\includegraphics[width=\linewidth]{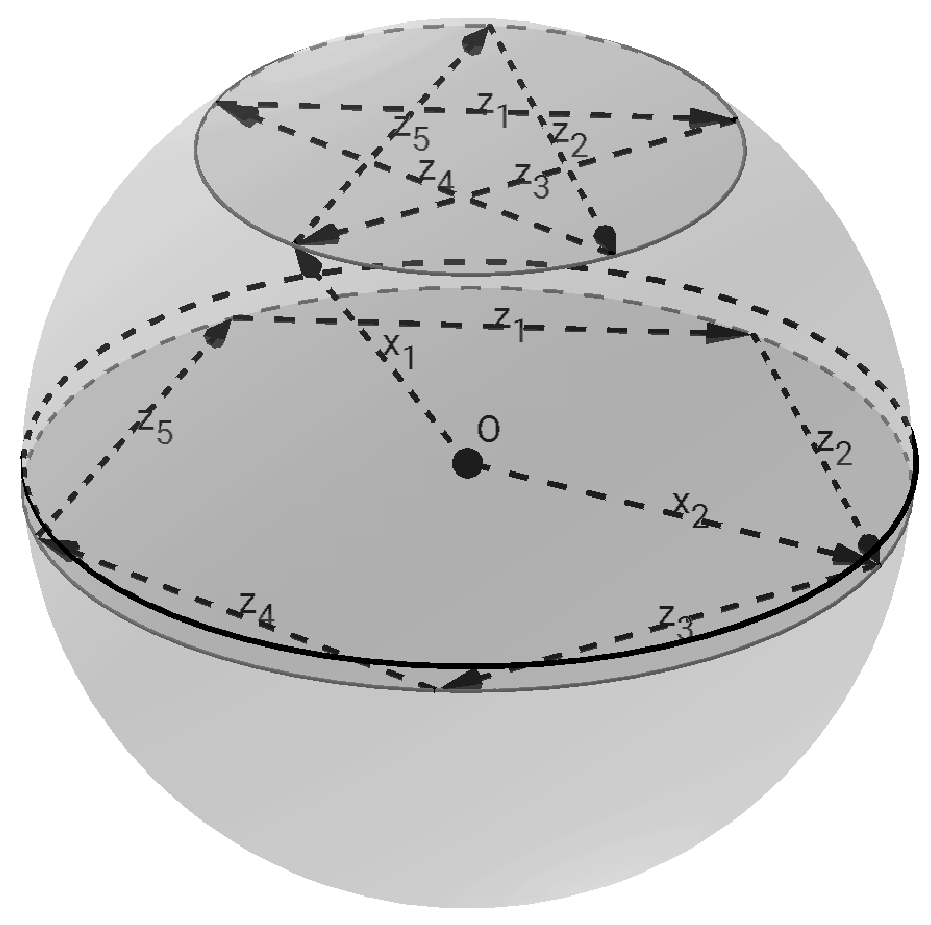}
\end{minipage}%
\caption{An $S^2$-flow on the Petersen Graph.}
\label{fig:S2_flow_pet}
\end{figure}

By previous considerations it seems natural to ask whether there exists a graph $G$ having a $\Sigma_5$-flow but without an $H_5$-flow. We remark that, such graph might not even exist as, by Theorem \ref{thm:equiv_CDC}, it would be a counterexample to the Oriented $5$-Cycle Double Cover Conjecture.

\subsection{A description of $H_d$ as vertex set of a line graph}

Note that $H_4$ can be viewed as the set of vertices of a cuboctahedron (see \cite{CoxeterBook}) embedded in $\mathbb{R}^4$. Analogously, $H_5$ is the set of vertices of a runcinated $5$-cell in $\mathbb{R}^5$ (see \cite{5-cell} for a definition), and more in general polytopes of dimension $d-1$ embedded in $\mathbb{R}^d$ can be obtained, for every set $H_d$, by connecting two elements of $H_d$ with an edge if and only if their difference is still an element of $H_d$. For every $d\geq 4$, all these polytopes can be viewed as a sort of generalization of the regular hexagon: indeed, as it happens in the regular hexagon, all their edges have the same length and such length is equal to the distance between the origin and each vertex.

In terms of graphs, we prove that the graph $G_d$ corresponding to the $(d-1)$-dimensional polytope described above is nothing but the line graph of the crown graph $Cr(2d)$ on $2d$ vertices. Recall that the crown graph $Cr(2d)$ can be described as a complete bipartite graph from which the edges of a perfect matching have been removed.  

\begin{proposition}
Consider the graphs $G_d$ and $Cr(2d)$ described above. Then $G_d \cong L(Cr(2d))$.
\end{proposition}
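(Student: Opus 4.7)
The plan is to exhibit an explicit bijection between $V(G_d)$ and $E(Cr(2d))$ and verify that it preserves adjacency. First I would check that the two sides have the same cardinality: an element of $H_d$ is determined by the ordered pair of distinct indices specifying the positions of its $+1$ and $-1$ entries, giving $|V(G_d)| = |H_d| = d(d-1)$; on the other hand, $Cr(2d)$ is obtained from $K_{d,d}$ by deleting a perfect matching, so it has $d^2 - d = d(d-1)$ edges.

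Label the two color classes of $Cr(2d)$ as $A = \{a_1, \dots, a_d\}$ and $B = \{b_1, \dots, b_d\}$, with $a_i b_j \in E(Cr(2d))$ iff $i \neq j$, and for $i \neq j$ write $v_{ij}$ for the element of $H_d$ having $+1$ in position $i$ and $-1$ in position $j$. I would then define
$$\phi \colon V(G_d) \to E(Cr(2d)), \qquad \phi(v_{ij}) = a_i b_j,$$
which is evidently a bijection.

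To see that $\phi$ is a graph isomorphism, I would carry out a short case analysis on the difference $v_{ij} - v_{kl}$ according to how the pairs $\{i,j\}$ and $\{k,l\}$ overlap. Since the coordinates of any such difference automatically sum to zero, the condition $v_{ij} - v_{kl} \in H_d$ amounts to saying that exactly one coordinate equals $+1$, exactly one equals $-1$, and all others vanish. A routine check yields the following. If $i=k$ and $j\neq l$, then $v_{ij} - v_{kl} = v_{lj} \in H_d$. If $j=l$ and $i\neq k$, then $v_{ij} - v_{kl} = v_{ik} \in H_d$. If $i=l$ or $j=k$ (with $v_{ij}\neq v_{kl}$) the difference has an entry equal to $\pm 2$. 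And if $\{i,j\} \cap \{k,l\} = \emptyset$ it has four nonzero coordinates. Hence $v_{ij}$ and $v_{kl}$ are adjacent in $G_d$ precisely when $i = k$ or $j = l$, which is exactly the condition for the edges $a_i b_j$ and $a_k b_l$ to share an endpoint in $Cr(2d)$, i.e., to be adjacent in $L(Cr(2d))$.

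The argument is completely direct and I do not anticipate any real obstacle; the only point requiring mild care is keeping the case analysis organized, and the key conceptual step is simply recognising the canonical identification of an element of $H_d$ with the ordered pair formed by the positions of its $+1$ and $-1$ entries.
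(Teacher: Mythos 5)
Your proposal is correct and follows essentially the same route as the paper: the same bijection $v_{ij} \mapsto a_i b_j$ between $H_d$ and the edges of $Cr(2d)$, together with the observation that two elements of $H_d$ are adjacent in $G_d$ exactly when their $+1$ entries or their $-1$ entries occupy the same position, which matches edge-adjacency in the line graph. Your explicit case analysis of the difference $v_{ij}-v_{kl}$ just spells out the adjacency check that the paper states more briefly.
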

\begin{proof}
By definition of $G_d$ we have $V(G_d)=H_d$ and $E(G_d)=\{ab \mid a,b \in H_d,\,\,\, \pm (a-b) \in H_d\}$.
Denote the vertices of $Cr(2d)$ by $u_1,...,u_d,v_1,...v_d$, where the vertices $u_i$ are in an independence class of $Cr(2d)$ and the vertices $v_i$ in the other one. Let $E(Cr(2d))= \{u_iv_j \mid i \neq j\}$ and use the same notation for the edges of $Cr(2d)$ and the vertices of its line graph.

Associate to any $u_iv_j \in V(L(Cr(2d)))$ the $d$-tuple with $1$ in the $i$-th entry, $-1$ in the $j$-th entry and $0$ in all the other entries. This association gives a bijective map between the vertices of $L(Cr(2d))$ and $H_d=V(G_d)$.
Observe that a vertex $u_iv_j \in V(L(Cr(2d)))$ is adjacent to any other vertex $u_lv_m$ such that either $i=l$ or $j=m$. Moreover, the edge-set of $G_d$ can be also described as the set of (unordered) pairs $(a,b)$ such that $a,b \in H_d$ and $a$ and $b$ are equal in exactly one non-zero entry. Hence the bijective map described above induces a bijection also between $E(L(Cr(2d)))$ and $E(G_d)$.
\end{proof}

\subsection{Upper bounds for $\phi_{d-1}(G)$ and $\phi_{d-2}(G)$}

In \cite{MMRT_(r_d)NZF} it is recalled that if a graph $G$ admits a $5$-cycle double cover, then $\phi_5(G)=2$. If we make a stronger hypothesis on $G$, that is that $G$ admits an oriented $5$-cycle double cover, then we obtain that $\phi_4(G)=2$. Indeed, by Theorem \ref{thm:equiv_CDC}, this is equivalent, for $G$, to have an $H_5$-flow. Hence, $G$ admits also a $\Sigma_5$-flow, and by Remark \ref{rem:sd_equivalence}, also an $S^3$-flow. In particular, it holds that $\phi_4(G)=2$.

Hence the following holds true.

\begin{proposition}
The Oriented $5$-Cycle Double Cover Conjecture (Conjecture \ref{conj:o5cdcc}) implies the $S^2$-flow Conjecture (Conjecture \ref{conj:S2}) for $d\geq 4$.
\end{proposition}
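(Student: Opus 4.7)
The paragraph immediately preceding the proposition already does most of the work for the case $d=4$; my plan is essentially to formalize that argument and then extend it to all $d\geq 4$ by a trivial dimension-raising embedding.

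First, assume the Oriented $5$-Cycle Double Cover Conjecture and fix a bridgeless graph $G$. Then $G$ has an oriented $5$-cycle double cover, so by Theorem \ref{thm:equiv_CDC} it admits an $H_5$-flow $\varphi\colon E(G)\to H_5\subset\mathbb{R}^5$. Since $H_5\subset\Sigma_5$ by definition, $\varphi$ is in particular a $\Sigma_5$-flow. By Remark \ref{rem:sd_equivalence}, the existence of a $\Sigma_5$-flow on $G$ is equivalent to the existence of an $S^3$-flow on $G$; concretely, one applies the isometry between $\Sigma_5$ and the unit sphere in $\mathbb{R}^4$ (i.e., a rigid motion together with the rescaling implicit in the equivalence) edge by edge. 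This produces a nowhere-zero flow $\psi\colon E(G)\to\mathbb{R}^4$ with $\|\psi(e)\|=1$ for every edge $e$, so every component lies in the interval $[1,r-1]$ exactly when $r\geq 2$. Together with the trivial lower bound $\phi_d(G)\geq 2$ (forced by $\|\varphi(e)\|\geq 1$ and $\|\varphi(e)\|\leq r-1$), this gives $\phi_4(G)=2$.

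To upgrade to arbitrary $d\geq 4$, I would use the obvious isometric embedding $\iota\colon\mathbb{R}^4\hookrightarrow\mathbb{R}^d$ given by $(x_1,x_2,x_3,x_4)\mapsto(x_1,x_2,x_3,x_4,0,\ldots,0)$. Composing the $S^3$-flow $\psi$ from the previous step with $\iota$ (keeping the orientation of $G$ unchanged) yields $\iota\circ\psi\colon E(G)\to\mathbb{R}^d$. Norms are preserved by $\iota$, so every edge still carries a vector of norm $1$; and since $\iota$ is linear, Kirchhoff's law at every vertex is preserved coordinatewise. Hence $\iota\circ\psi$ is an $(r,d)$-NZF for every $r\geq 2$, which together with $\phi_d(G)\geq 2$ gives $\phi_d(G)=2$.

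There is no real obstacle in this argument, since all the substantive work has been done in Theorem \ref{thm:equiv_CDC} and Remark \ref{rem:sd_equivalence}; the only mildly subtle point is the conceptual one of why $d=3$ must be excluded. The reason is exactly the failure of the padding trick in the opposite direction: an $S^3$-flow lives in $\mathbb{R}^4$ and cannot be pushed down to $\mathbb{R}^3$ by deleting coordinates (norms are not preserved), so the conclusion $\phi_3(G)=2$, which is the case $d=3$ of the $S^2$-flow Conjecture, is not accessible from an oriented $5$-cycle double cover by this geometric route. This is worth mentioning at the end of the proof as a justification for the hypothesis $d\geq 4$.
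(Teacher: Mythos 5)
Your argument is correct and follows the same route the paper takes in the paragraph preceding the proposition: oriented $5$-cycle double cover $\Rightarrow$ $H_5$-flow (Theorem \ref{thm:equiv_CDC}) $\Rightarrow$ $\Sigma_5$-flow $\Rightarrow$ $S^3$-flow (Remark \ref{rem:sd_equivalence}) $\Rightarrow$ $\phi_4(G)=2$. Your explicit zero-padding step for $d\geq 5$ and the closing remark on why $d=3$ is out of reach are details the paper leaves implicit, but they do not change the approach.
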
  

More in general, arguing as above, it holds that if a graph $G$ has an oriented $d$-cycle double cover, it has an $S^{d-2}$-flow, implying that $\phi_{d-1}(G)=2$.

If we assume $G$ to only admit a $d$-cycle double cover, not necessarily oriented, we obtain the following upper bound on $\phi_{d-1}(G)$.

%In the Introduction we recalled that $S^d$-flows can be constructed starting with suitable (non-oriented) $d$-cycle covers. We conclude the present section by showing that a $(d-1)$-dimensional flow can also be constructed starting with a (non-oriented) $d$-cycle double cover. Dropping the dimension by $1$ results in the flow values not being unitary anymore.

\begin{proposition}
For every  $d \ge 3$, if a graph $G$ has a $d$-cycle double cover $\mathcal{C}$, then $\phi_{d - 1}(G) \le 1 + \sqrt{d/(d-2)}$.
\label{prop:upper_d-1}
\end{proposition}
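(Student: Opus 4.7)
The plan is to build an explicit $(r, d-1)$-NZF from the given $d$-cycle double cover $\mathcal{C} = \{C_1, \dots, C_d\}$, using a regular simplex in $\mathbb{R}^{d-1}$ as a geometric model for $H_d$. Specifically, I would pick $d$ unit vectors $v_1, \dots, v_d \in \mathbb{R}^{d-1}$ forming a regular simplex centered at the origin, so that $\sum_{i=1}^d v_i = 0$, $\|v_i\|=1$ for all $i$, and $v_i \cdot v_j = -1/(d-1)$ for $i \neq j$. (Geometrically, these are, up to scaling, the projections of the vectors of $H_d$ onto the hyperplane $\Sigma_d$ viewed inside $\mathbb{R}^{d-1}$.)

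Next I would orient each cycle $C_i$ of $\mathcal{C}$ in an Eulerian way $O_i$ (possible since each $C_i$ is an even subgraph), and fix an arbitrary reference orientation $O$ on $G$. For every edge $e$, let $C_h$ and $C_k$ be the two cycles containing $e$, and define
\[
\varphi(e) = \epsilon_h(e)\, v_h + \epsilon_k(e)\, v_k \in \mathbb{R}^{d-1},
\]
where $\epsilon_i(e) = +1$ if $O_i(e) = O(e)$ and $\epsilon_i(e) = -1$ otherwise. Equivalently, each $C_i$ carries a $\pm 1$-flow with respect to $O$ induced by $O_i$, and $\varphi$ is the sum over $i$ of these scalar flows tensored with the vectors $v_i$. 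Flow conservation at every vertex then follows coordinate-wise from the fact that each $C_i$ is Eulerian.

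The core calculation is the norm estimate. Since $\|v_h \pm v_k\|^2 = 2 \pm 2 v_h \cdot v_k = 2 \mp \tfrac{2}{d-1}$, we obtain
\[
\|\varphi(e)\| \in \left\{\sqrt{\tfrac{2(d-2)}{d-1}},\ \sqrt{\tfrac{2d}{d-1}}\right\}
\]
for every edge $e$. Rescaling $\varphi$ by the factor $\sqrt{(d-1)/(2(d-2))}$ so that the minimum norm becomes exactly $1$, the maximum norm becomes $\sqrt{d/(d-2)}$. Thus $\varphi$ is an $(r, d-1)$-NZF with $r - 1 = \sqrt{d/(d-2)}$, yielding the claimed bound $\phi_{d-1}(G) \le 1 + \sqrt{d/(d-2)}$.

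The only mild subtlety — not really an obstacle — is arguing that $\varphi$ is nowhere-zero: since $v_h$ and $v_k$ are distinct vertices of a regular simplex for $h \neq k$, the four combinations $\pm v_h \pm v_k$ are all non-zero (this uses $d \ge 3$, so that no two simplex vertices are antipodal). Everything else is a routine check, and the key insight is only the choice of the simplex as the geometric object that captures unoriented $d$-cycle double covers (in contrast to $H_d$, which encodes oriented ones via Theorem~\ref{thm:equiv_CDC}).
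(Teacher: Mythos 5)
Your proposal is correct and follows essentially the same route as the paper: the paper's explicit points $a_1,\dots,a_d$ (with entries $d\sqrt{d}-2\sqrt{d}-1$, $-\sqrt{d}-1$, and $d-1$) are precisely the vertices of a regular simplex in $\mathbb{R}^{d-1}$ centered at the origin, rescaled to radius $(d-1)^{3/2}$, and the assignment of $\pm a_i$ via Eulerian orientations of the cycles and the final normalization of the two norms $\lVert v_h\pm v_k\rVert$ match your argument exactly. Your abstract description via $v_i\cdot v_j=-1/(d-1)$, together with the explicit remark on why the flow is nowhere-zero for $d\ge 3$, is a cleaner presentation of the identical construction.
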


\begin{proof}
We can assume that $\mathcal{C} = \{C_1,\dots,C_d\}$.
We now consider $d$ points $a_1,..., a_d \in \mathbb{R}^{d-1}$. More precisely, for $i \in \{1, \dots, d - 1\}$, let $a_i \in \mathbb{R}^{d-1}$ have the value $d\sqrt{d} - 2\sqrt{d} - 1$ in its $i$-th entry and $- \sqrt{d} - 1$ in all the other entries. Let $a_d \in \mathbb{R}^{d-1}$ have $d - 1$ in all its entries.
%Orient the cycle $C_i$ arbitrarily and add the value $a_i$ to all the edges of $C_i$ according to its orientation
Let $O$ be an orientation of $G$ and, for each $i\in\{1,\dots,d\}$, fix an eulerian orientation $O_i$ on $C_i$.

For all $i\in\{1,\dots,d\}$, add the flow value $a_i$, resp.\ $-a_i$, to all edges $e\in C_i$ such that $O_i(e)=O(e)$, resp.\ $O_i(e)\ne O(e).$

Note that every edge of $G$ is contained in exactly two members of $\mathcal{C}$.
Then, every edge of $G$ receives a vector with a norm  $|a_i + a_j| = (d - 1)\sqrt{2(d-2)}$ or $|a_i - a_j| = (d - 1)\sqrt{2d}$, for $1 \le i \ne j \le d$. After normalizing, we obtain a $(1 + \sqrt{d/(d-2)}, d - 1)$-NZF on $G$.
\end{proof}

Observe that the upper bound of Proposition \ref{prop:upper_d-1} approaches $2$ as $d$ grows.
We give a similar result for $\phi_{d-2}$.

In \cite{BatErd}, the authors claim that very likely the ratio between the maximum and the minimum distance for a set of $d$ points in $R^{d-2}$ is larger than $\sqrt{4/3}$. That was proved to be false by Seidel in 1969. Example 2.3 and Example 2.4 in \cite{Sei} are proved to be optimal in order to minimize such a ratio. By using Seidel's examples we obtain the following.

\begin{proposition}
For every $d\ge 3$, if a graph $G$ admits an oriented $d$-cycle double cover $\mathcal{C}$, then $$\phi_{d-2}(G)\leq \begin{cases} 1+\sqrt{\frac{d}{d-2}} \text{ if $d$ even,} \\
1+\sqrt{\frac{d^2-1}{d^2-2d-1}} \text{ if $d$ odd.}
\end{cases}$$.
\label{prop:upper_d-2}
\end{proposition}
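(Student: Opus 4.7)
My plan is to adapt the construction of Proposition~\ref{prop:upper_d-1}, exploiting the fact that $\mathcal{C}$ is oriented in order to drop the ambient dimension by one. Fix any orientation $O$ of $G$ and write $\mathcal{C} = \{O_1(C_1), \dots, O_d(C_d)\}$. Choose $d$ points $a_1, \dots, a_d \in \mathbb{R}^{d-2}$ (to be specified at the end) and define
$$\varphi(e) = \sum_{i \colon e \in C_i} \varepsilon_i(e)\, a_i, \qquad \varepsilon_i(e) = \begin{cases} +1 & \text{if } O_i(e) = O(e),\\ -1 & \text{otherwise.}\end{cases}$$

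The key observation is that, because $\mathcal{C}$ is \emph{oriented}, whenever an edge $e$ lies in $C_i \cap C_j$ the two induced orientations on $e$ are opposite, so $\varepsilon_i(e) = -\varepsilon_j(e)$. Hence $\varphi(e) = \pm(a_i - a_j)$ and $|\varphi(e)| = |a_i - a_j|$. Compared with Proposition~\ref{prop:upper_d-1}, where both $|a_i + a_j|$ and $|a_i - a_j|$ needed to be controlled, here only the pairwise \emph{differences} of the $a_i$ occur; this saving in the number of constrained norms is exactly what lets us work in $\mathbb{R}^{d-2}$ instead of $\mathbb{R}^{d-1}$. The flow axiom for $\varphi$ is verified cycle by cycle, as in Proposition~\ref{prop:upper_d-1}: since $O_i$ is eulerian, the partial contribution $\pm a_i$ on $C_i$ already satisfies Kirchhoff's law at every vertex of $C_i$, and summing over $i$ gives a flow on $G$.

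After rescaling so that $\min_{i \neq j}|a_i - a_j| = 1$, the map $\varphi$ becomes a $(1 + \rho,\, d-2)$-NZF, where $\rho = \max_{i \neq j}|a_i - a_j| / \min_{i \neq j}|a_i - a_j|$. What remains is a purely geometric optimisation problem: choose $d$ points in $\mathbb{R}^{d-2}$ that minimise this ratio. At this step we invoke Seidel's Examples~2.3 and~2.4 in~\cite{Sei}, which are proved optimal and whose extremal distance ratios equal $\sqrt{d/(d-2)}$ in the even case and $\sqrt{(d^2-1)/(d^2-2d-1)}$ in the odd case, yielding the bounds in the statement.

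The main obstacle is not in the flow construction (which is a near-verbatim transcription of the argument of Proposition~\ref{prop:upper_d-1}) but rather in checking that Seidel's configurations really attain the two ratios claimed. The even case is relatively benign because the optimal configuration is highly symmetric, whereas the odd case uses a less symmetric arrangement and requires a more careful computation of the extremal distances; that computation, together with a citation to \cite{Sei} for optimality, completes the proof.
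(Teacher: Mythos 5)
Your proposal follows essentially the same route as the paper: the identical flow construction in which the oriented cover forces every flow value to be a difference $\pm(a_i-a_j)$, followed by an appeal to Seidel's two-simplex configurations (Examples 2.3 and 2.4 of \cite{Sei}), whose extremal distance ratios give exactly the stated bounds. Your explicit explanation of why orientedness reduces the problem to controlling only pairwise differences is, if anything, slightly more careful than the paper's own phrasing.
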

\begin{proof}
If $d=3$ the statement follows from Corollary \ref{cor_d3}.

Let $d\ge4$ and let $\mathcal{C} = \{C_1,\dots,C_d\}$. Set $d_1\leq d_2$ such that $d_1+d_2=d-2$ and $|d_1-d_2|\leq1$.
Let $U_1$ and $U_2$ be two orthogonal and complementary subspaces of $\mathbb{R}^{d-2}$ passing through the origin, and having dimensions $d_1$ and $d_2$, respectively.
For $i=1,2$, consider a regular $d_i$-dimensional simplex $F_i$ in $U_i$ and centered in the origin. Choose $F_1$ and $F_2$ having the same side length equal to $\sqrt{d-2}$ if $d$ even, and $\sqrt{d-\frac{1}{d}-2}$, if $d$ odd.
%Note that $F_i$ is inscribed in an hypersphere of radius $\sqrt{\frac{d_i}{2(d_i+1)}}$.

Denote by $P_1,P_2,...,P_{d}$ the union of the vertices of $F_1$ and $F_2$.

As in the proof of Proposition \ref{prop:upper_d-1}, let $O$ be an orientation of $G$ and, for each $i\in\{1,\dots,d\}$, fix an eulerian orientation $O_i$ on $C_i$.
For all $i\in\{1,\dots,d\}$, add the flow value $P_i$, resp.\ $-P_i$, to all edges $e\in C_i$ such that $O_i(e)=O(e)$, resp.\ $O_i(e)\ne O(e).$

If $d$ is even, see also Example 2.3 in \cite{Sei}, the distance between two points $P_i$ and $P_j$ is either $\sqrt{d}$ or $\sqrt{d-2}$.
If $d$ is odd, see also Example 2.4 in \cite{Sei}, the distance between two points $P_i$ and $P_j$ is either $\sqrt{d-\frac{1}{d}}$ or $\sqrt{d-\frac{1}{d}-2}$.
Thus, in the former case the ratio between maximum and minimum norm of flow values is $\sqrt{d/d-2}$ and in the latter case is $\sqrt{(d^2-1)/(d^2-2d-1)}$.
\end{proof}

\section*{Acknowledgments}

Davide Mattiolo is supported by a Postdoctoral Fellowship of the Research Foundation Flanders (FWO), project number 1268323N.

\section*{Statements and Declarations}

The authors have no relevant financial or non-financial interests to disclose.

\section*{Data availability} Data sharing not applicable to this article as no datasets were generated or analyzed during the current study.

\end{document}